\newtheorem{theorem}{Theorem}
\newtheorem{proposition}{Proposition}
\newtheorem{remark}{Remark}
\newcommand{\ba}{\begin{array}}
	\newcommand{\ea}{\end{array}}
\newcommand{\be}{\begin{equation}}
\newcommand{\ee}{\end{equation}}
\newcommand{\mc}{\mathcal}
\def\1{\boldsymbol{1}}
\newcommand{\R}{\mathbb{R}}
\newcommand{\summ}{\sum\limits}
\def\R{\mathbb{R}}
\def\diag{{\rm diag}\,}
\tikzstyle{v_c}=[circle, draw,inner sep=2pt, minimum width=12pt, color=blue]
\tikzstyle{v_a}=[circle, draw,inner sep=2pt, minimum width=12pt, color=red]
\tikzstyle{edge} = [draw,thick,-,font=\small ]
\tikzstyle{label} = [draw,fill=black,font=\normalsize]
\def\BibTeX{{\rm B\kern-.05em{\sc i\kern-.025em b}\kern-.08em
		T\kern-.1667em\lower.7ex\hbox{E}\kern-.125emX}}
\title{\LARGE \bf On SIR epidemic models with feedback-controlled interactions and network effects}
\author{Martina~Alutto, 
	Giacomo~Como,~\IEEEmembership{Member,~IEEE,}
	and Fabio Fagnani
	\thanks{The authors are with the  Department of Mathematical Sciences ``G.L.~Lagrange,'' Politecnico di Torino, 10129 Torino, Italy  (e-mail: {\{martina.alutto;\,giacomo.como;\,fabio.fagnani;\}@polito.it}). G.~Como is also with the Department of Automatic Control, Lund University, 22100 Lund, Sweden.}
	\thanks{This work was partially supported by a MIUR grant ``Dipartimenti di Eccellenza 2018--2022'' [CUP: E11G18000350001], a MIUR  Research Project PRIN 2017 ``Advanced Network Control of Future Smart Grids'' (http://vectors.dieti.unina.it) and by the Compagnia di San Paolo.}%
}
\begin{document}

	\maketitle
	\thispagestyle{empty}
	\pagestyle{empty}
	
\begin{abstract}	We study extensions of the classical SIR model of epidemic spread. First, we consider a single population modified SIR epidemics model in which the contact rate is allowed to be an arbitrary function of the fraction of susceptible and infected individuals. This allows one to model either the reaction of individuals to the information about the spread of the disease or the result of government restriction measures, imposed to limit social interactions and contain contagion. We study the effect of both smooth dependancies and discontinuities of the contact rate. In the first case, we prove the existence of a threshold phenomenon that generalizes the well-known dichotomy associated to the reproduction rate parameter in the classical SIR model. Then, we analyze discontinuous feedback terms using tools from sliding mode control. Finally, we consider network SIR models involving different subpopulations that interact on a contact graph and present some preliminary simulations of modified versions of the classic SIR network. 
		
\end{abstract}

\section{Introduction}\label{sec:introduction}
As a result of the COVID-19 pandemic, there has been a renewed interest in the mathematical modeling, analysis, and control of epidemic spreadings. See, e.g., \cite{Giordano.ea:2020,Birge.ea:2020,Alvarez.ea:2020,Acemoglu.ea:2020,Miclo.ea:2020,Zino.Cao:2021}. Of the two best known mathematical models of epidemics ---the so-called susceptible-infective-succeptive (SIS) and susceptible-infective-recovered (SIR) models--- it is the latter that better approximates the spread of diseases like COVID-19 over a time horizon during which individuals tend not to get infected more than once, either because they have deceased or since they have recovered achieving some degree of immunity. The deterministic SIR epidemic model, as first presented in the pioneering work \cite{Kermack.McKendrick:1927}, is a compartmental model consisting of a nonlinear system of three coupled differential equations describing the evolution of the fractions of susceptible, infected, and removed individuals in a fully mixed closed population.

The main feature of the deterministic SIR model is the existence of a phase transition described in terms of a scalar parameter, known as the reproduction number, whose value can determine two fundamentally different behaviors of the epidemics. Specifically, if the reproduction number does not exceed a unitary value, then the fraction of the infected individuals is bound to remain monotonically decreasing in time, and in fact asymptotically vanishing as time grows large, thus preventing an epidemic outbreak. In contrast, if the reproduction number exceeds the unitary threshold, then the fraction of infected individuals is initially increasing until reaching a peak, after which it starts to decrease monotonically as in the previous case and vanishes asymptotically in the large time limit. Thus, reproduction number values above one are equivalent to the occurrence of an epidemic outbreak. This phase transition is a crucial aspect of the epidemics and motivates the recent focus on a correct estimation of the reproduction number, as well the attempts to design control policies capable to stir the reproduction number below the unitary threshold level.

The original deterministic SIR model relies on the assumption that the rate at which individuals get infected is proportional to the product between the fraction of the susceptible individuals and the fraction of infected individuals. This is an appropriate model if we envision a fully mixed population with constant contact and transmission rates.  
There are many reasons for considering different interaction terms, as the result of either endogenous or exogenous interactions, e.g.: 
\begin{itemize}
\item self-isolation policies or precautionary measures such as mask wearing put into place by the individuals who have become aware of the danger of the epidemic spreading;
\item feedback policies enforcing contact rate reduction by a central controller;
\item heterogeneities in the population and network effects. 
\end{itemize}

In this paper, we consider epidemic models that address the points listed above. Specifically, we study two extensions of the classical SIR model. First, we introduce a modified deterministic SIR model with a general interaction term describing the contact frequency rate as an arbitrary function of the fractions of susceptible and infected individuals in the population. Such function is typically decreasing in the fraction of infected individuals and can be interpreted as an endogenous reaction to the spread of epidemics (people tend to self-isolate), or rather an exogenous feedback control term modeling the action of a central planner actuating some partial social distancing measure. Then, we consider deterministic network SIR models where nodes represent subpopulations, derived by a split by either biological attributes (age, gender, risk) or geographical ones. 


Our contribution is three-fold. First, for the extended scalar SIR model described above, we prove in Section \ref{sec:2} that as long as the contact rate is a smooth function of the state that is nondecreasing in the fraction of susceptible individuals, we retrieve the same threshold behavior of the SIR model: either the infection dies out  monotonically, or it first increases, reaches a peak, and then decreases monotonically until vanishing asymptotically. Notice that in both cases the fraction of infected individuals remains a unimodal function of time, i.e., it adimts an unique local maximum that occurs at $t=0$ when the the reproduction number does not exceed the unitary value and at $t>0$ if the reproduction number value is larger than $1$. Second, in Section \ref{sec:3}, we consider the generalized scalar SIR model where the contact rate is a discontinuous feedback term. For the especially important case of piecewise constant feedback controls, we show conditions under which a new sliding motion phenomenon can arise. In fact, we prove that in such models the fraction of infected individuals can remain constant at its maximum level, for a trivial interval of time before starting its monotone convergence to $0$. Finally, in Section \ref{sec:4} we study the deterministic network SIR model and show the effect of introducing two different types of control within the classical model. The former is a limitation of interactions between different nodes, while the latter is a general reduction of any kind of interaction.



\section{A SIR model with general interaction term}\label{sec:2}
We consider the following system of ODE's
\begin{equation}
	\label{eq:eq2}
	\begin{cases}
		\dot{x}(t) = -x(t) y(t) f(x(t),y(t))\\
		\dot{y}(t) =  x(t) y(t)f(x(t),y(t)) - \gamma y(t)\\
		\dot{z}(t) = \gamma y(t)\,, 
	\end{cases}
\end{equation}
where $f : \R^{2} \mapsto \mathbb{R}_{+}$ is a state-dependent contact rate and $\gamma>0$ is a constant recovery rate. Notice how, in the special case  $f(x,y)=\beta>0$, i.e., when the contact rate is a positive constant, \eqref{eq:eq2} reduces to the classical deterministic SIR model 
\begin{equation}
	\label{eq:eq0}
	\begin{cases}
		\dot{x}(t) = -\beta x(t) y(t) \\
		\dot{y}(t) =  \beta x(t) y(t) - \gamma y(t)\\
		\dot{z}(t) = \gamma y(t) \,,
	\end{cases}
\end{equation}
first introduced and studied in \cite{Kermack.McKendrick:1927}. 

For now we assume $f$ to be a $\mc C^1$ function. As it happens for the SIR model, the three equations are dependent, yielding that $x(t)+y(t)+z(t)$ is constant. Throughout the paper, we assume this constant to be one, so to interpret $x(t)$, $y(t)$, and $z(t)$ as fractions. The same considerations than in the SIR model, moreover, lead to the fact that solutions of (\ref{eq:eq2}) are globally defined (and unique) and that they keep invariant the simplex 
$$\mc S=\left\{(x,y,z)\in\R^3_+\,|\, x+y+z=1\right\}\,.$$
From now on we assume to always pick an initial condition that lays in $\mc S$. Considering that $x(t)$ and $z(t)$ are monotone functions of time, the former  decreasing and the latter increasing, we obtain that they always converge to a limit as $t$ grows large. Invariance of the simplex $\mc S$ implies that the all solution vectors converges asymptotically and that in the limit $y(t)\to 0$. 
Now, extending the contribution in \cite{Capasso1978AGO}, we prove that, under mild assumptions on the function $f$, the threshold behavior of the SIR is retrieved in this model.

To this aim, define, for every solution $(x(t), y(t), (t))$ of (\ref{eq:eq2}), the function
\be R(t)=x(t) f(x(t),y(t))/ \gamma\ee 
We now show that $R(t)$ plays the same role than the usual reproduction number for the SIR model (to which it reduces when $f$ is constant). The following invariant result holds.

%
%
%
%
%
%
%
%
\begin{proposition}
	\label{prop-invariant}
	Assume that $f $ is of class $\mc C^1$ and is such that $\partial f/\partial x \geq 0$ in every point of $\mc S$. 
			For every solution of the ODE (\ref{eq:eq2}), if $R(0)<1$, then $R(t)<1$ for all $t\ge0$.
\end{proposition}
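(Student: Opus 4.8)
The plan is to run a standard first-crossing (barrier) argument on the scalar quantity $R(t)$. Since $f$ is $C^1$ and the trajectory $t\mapsto(x(t),y(t),z(t))$ is $C^1$, the function $R(t)=x(t)f(x(t),y(t))/\gamma$ is $C^1$ as well. If the conclusion were false, then, because $R(0)<1$ and $R$ is continuous, there would be a first time $t^\ast>0$ with $R(t^\ast)=1$ and $R(t)<1$ for every $t\in[0,t^\ast)$. At such a time the derivative $\dot R(t^\ast)$, being the left derivative of a function that reaches the value $1$ from below, must satisfy $\dot R(t^\ast)\ge 0$. The whole proof then reduces to showing that in fact $\dot R(t^\ast)<0$, which gives the contradiction.

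To compute $\dot R$ I would differentiate $R=xf(x,y)/\gamma$ and substitute $\dot x=-xyf$ and $\dot y=y(xf-\gamma)$ from \eqref{eq:eq2}, obtaining
\[
\dot R=\frac{xy}{\gamma}\Big(-f\,(f+x f_x)+f_y\,(xf-\gamma)\Big),
\]
where $f_x,f_y$ denote the partial derivatives of $f$ evaluated at $(x(t),y(t))$. The key remark is that the factor $xf-\gamma$ is exactly $\gamma(R-1)$, hence it vanishes precisely at $t=t^\ast$; the $f_y$-term therefore drops out and
\[
\dot R(t^\ast)=-\frac{x(t^\ast)\,y(t^\ast)\,f\,(f+x f_x)}{\gamma}.
\]
Now $f>0$ everywhere and, by hypothesis, $f_x>0$ on $\mc S$, so $f+x f_x>0$ (here $x(t^\ast)=\gamma/f>0$ anyway). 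Thus the sign of $\dot R(t^\ast)$ is opposite to that of $x(t^\ast)y(t^\ast)$.

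It remains to ensure $x(t^\ast)>0$ and $y(t^\ast)>0$, so that the inequality is strict. I would dispatch the degenerate initial data first: if $y(0)=0$ then $y\equiv0$, $x\equiv x(0)$, so $R\equiv R(0)<1$; if $x(0)=0$ then $x\equiv0$ and $R\equiv0<1$; in either case there is nothing to prove. If $x(0)>0$ and $y(0)>0$, then, since the hyperplanes $\{x=0\}$ and $\{y=0\}$ are invariant for \eqref{eq:eq2}, uniqueness of solutions forces $x(t)>0$ and $y(t)>0$ for all $t\ge0$; in particular $x(t^\ast)y(t^\ast)>0$, whence $\dot R(t^\ast)<0$, contradicting $\dot R(t^\ast)\ge0$. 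Therefore no crossing time exists and $R(t)<1$ for all $t\ge0$.

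I expect the only point requiring genuine care to be this positivity bookkeeping — equivalently, upgrading $\dot R\le 0$ at a putative crossing to a \emph{strict} inequality; the differentiation of $R$ and the cancellation at $R=1$ are routine, and it is precisely this cancellation that explains why the monotonicity assumption on $x$ alone (the sign of $f_x$, with no condition on $f_y$) suffices.
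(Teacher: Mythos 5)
Your proof is correct and follows essentially the same route as the paper's: a first-crossing contradiction argument in which one differentiates $R$, observes that the $\partial f/\partial y$ term vanishes at the crossing time because $x f=\gamma$ there (equivalently $\dot y(t^\ast)=0$), and concludes $\dot R(t^\ast)<0$ from $\partial f/\partial x>0$. If anything, your version is slightly more careful than the paper's, which dispatches $x_0=0$ but glosses over the need for $y(t^\ast)>0$ to make the inequality strict.
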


\begin{proof} We first notice that if $x(0)=0$, then $x(t)=0$ at all time and consequently $R(t)=0$ at all time. We now consider the case when $x(0)>0$ that yields (by uniqueness of the solution) $x(t)>0$ at all time. In this case, we prove the result by contradiction. If not, by continuity, there exists $t^*>0$ such that $R(t^*)=1$ and $R(t)<1$ for all $t<t^*$.
We now compute the time derivative of $R(t)$:
\begin{equation}\label{dotR}
		\gamma \dot R = \dot{x} f + x \dfrac{\partial f}{\partial x}\dot{x} + x\dfrac{\partial f}{\partial y}\dot{y} 
	\end{equation}
We study the sign of $\dot{R}(t^*)$. Because of the sign of $\dot x$, the standing fact that $x>0$ at all time and the assumption on the function $f$, we have that the first two addends in the righthand side of \eqref{dotR} are always negative. Finally, the third term is $0$ at $t^*$ since $\dot y(t^*)=0$. We conclude that $\dot{R}(t^*)<0$. By continuity, we can state that $\dot{R}(t)<0$ in an interval $[t^{*} - \epsilon, t^{*}]$ for some $\epsilon >0$. Since $R(t)$ is decreasing in $[t^{*} - \epsilon, t^{*}]$ and stays strictly below $1$ for all $t<t^*$ for the assumption made, it follows that also $R(t^*)<1$ contrarily to what we had assumed. This yields the result.
%
%
%
%
%
%
%
%
\end{proof}\medskip

We can now state and prove the following result that shows how 
$R(t)$ plays the same exact role than the reproduction number for the SIR model.
\begin{theorem}
	\label{theo-SIRmod}
	Assume that $f $ is of class $\mc C^1$ and is such that $\partial f/\partial x \geq 0$ in every point of $\mc S$. Given an initial condition $(x_0, y_0, z_0)\in\mc S$ and called the corresponding solution of (\ref{eq:eq2}) as $(x(t), y(t), z(t))$, the following facts hold
				\begin{enumerate}
			\item[(i)] If $R(0)<1$, then $y(t)$ converges to $0$ monotonically.
		\item[(ii)] If $R(0)>1$ and $y(0)>0$, then there exists $t^*>0$ such that 
			\begin{itemize}
			\item $y(t)$ is monotonically increasing in $[0, t^*]$ 
			\item $y(t)$ is monotonically decreasing in $[t^*, +\infty[$ and converges to $0$
		\end{itemize}
	\end{enumerate}
\end{theorem}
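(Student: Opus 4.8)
The plan is to base everything on the identity $\dot y = \gamma\, y\,(R(t)-1)$, which follows at once from the second equation of (\ref{eq:eq2}) together with the definition $R(t)=x(t)f(x(t),y(t))/\gamma$. Thus, on any solution with $y_0>0$ (hence $y(t)>0$ for all $t$, by uniqueness), the sign of $\dot y(t)$ equals the sign of $R(t)-1$. Statement (i) is then immediate: if $R(0)<1$, Proposition \ref{prop-invariant} gives $R(t)<1$ for all $t$, so $\dot y(t)\le 0$ for all $t$ and $y(\cdot)$ is monotonically non-increasing; it converges to $0$ since, as recalled before the statement, every solution starting in $\mc S$ has $y(t)\to 0$ (if $y_0=0$ the claim is trivial).

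For statement (ii) the key point is a transversality property of the level set $\{R=1\}$. First, $R(0)>1$ forces $x_0>0$ (otherwise $R(0)=0$), hence $x(t)>0$ and $y(t)>0$ for all $t$. Differentiating exactly as in the proof of Proposition \ref{prop-invariant},
\[
\dot R = \dot x\, f + x\,\frac{\partial f}{\partial x}\,\dot x + x\,\frac{\partial f}{\partial y}\,\dot y,
\]
and at any instant $\tau$ with $R(\tau)=1$ we have $\dot y(\tau)=\gamma\, y(\tau)\,(R(\tau)-1)=0$, so the last term vanishes, while the first two are strictly negative because $\dot x<0$, $x>0$, $f>0$ (indeed $xf=\gamma>0$ there) and $\partial f/\partial x>0$. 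Hence $\dot R(\tau)<0$ at every point of $\{R=1\}$: the function $R(\cdot)$ can only cross the value $1$ downward.

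I would then observe that $R$ must reach the value $1$ in finite time, because if $R(t)>1$ for all $t$ then $\dot y>0$ for all $t$, so $y(t)\ge y_0>0$, contradicting $y(t)\to 0$. Let $t^*>0$ be the first time with $R(t^*)=1$ (it exists by continuity of $R$ and $R(0)>1$). On $[0,t^*)$ one has $R>1$, hence $\dot y>0$, so $y$ is strictly increasing on $[0,t^*]$. On $(t^*,+\infty)$ I claim $R<1$: this holds immediately after $t^*$ because $\dot R(t^*)<0$, and $R$ cannot later return to $1$, since at any such return point $\dot R\ge 0$, contradicting the transversality established above. Therefore $\dot y<0$ on $(t^*,+\infty)$, so $y$ is strictly decreasing there and again $y(t)\to 0$. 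The differentiation of $R$ and the convergence $y\to 0$ are the routine ingredients (both already available); the only delicate step is upgrading the local, one-sided transversality of $\{R=1\}$ to the global statement that $R$ stays below $1$ on all of $(t^*,+\infty)$, which is handled by the ``first return'' contradiction above.
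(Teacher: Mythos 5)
Your proof is correct and follows essentially the same route as the paper: part (i) is Proposition \ref{prop-invariant} combined with the sign identity $\dot y=\gamma y(R-1)$, and part (ii) locates the first crossing time of $\{R=1\}$ after ruling out $R(t)>1$ for all $t$ via the convergence $y(t)\to 0$. The only difference is that you make explicit, through the transversality $\dot R<0$ on $\{R=1\}$ (the same computation as in Proposition \ref{prop-invariant}) and a first-return argument, why $R$ remains below $1$ for all $t>t^*$ --- a step the paper compresses into ``by construction the properties expressed in (ii) hold true.''
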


\begin{proof}
Concerning (i), it follows from Proposition \ref{prop-invariant} that $R(t)<1$ for all $t$. This condition is equivalent to saying that $\dot{y}(t)<0$ at all times $t$ and proves (i).

Concerning (ii), notice that there must exist $t>0$ such that $R(t)<1$. Indeed, if not, $\dot{y}(t)>0 \, \forall t$ and $y(t)$ would not possibly tend to 0. If we now define
$$t^*=\inf\{t>0\,|\, R(t)<1\}$$
we have that by construction the properties expressed in (ii) hold true. The proof is now complete. 
\end{proof}

\begin{remark}\label{remark:classical} In the case when $f(x,y)=\beta$ is constant, we are back in the classical SIR model and Theorem \ref{theo-SIRmod} retrieves, in this case, the well known result on the behavior of curve of infected in this model. More details on the solutions can be obtained in this case. We briefly recall them below, as they will be needed in the next section. It will be convenient to set up the notation $\rho=\gamma/\beta$.

From the first and third equation in (\ref{eq:eq2}), we notice that the function
$$\Gamma(x,y,z) = \rho\ln x +z=\rho\ln x-x+1-y$$
is motion invariant. In particular, given the initial condition $x_0=1-\epsilon$,  $y_0=\epsilon$, and $z_0=0$, we obtain that the solution will lay in the manifold
\be\label{orbit}y=1-x+\rho\ln {x} -\rho\ln {(1-\epsilon)}\,.\ee
When $R(0)=(1-\epsilon)/\rho>1$, the maximum value reached by the infection happens in correspondence of $x=\rho$, as for this term $\dot{y}=0$, and is thus given by
\be\label{ymax}M(\epsilon, \rho)=1-\rho+\rho\ln \rho- \rho \ln (1-\epsilon)\,.\ee

\end{remark}

\bigskip
In most cases of interest, the feedback term $f$ is only function of $y$. Indeed, it is natural to imagine that a reaction both endogenous or exogenous be correlated to the extent of the current level of infection in the population. We notice that our result do not put any constraint on the way $f$ may depend on $y$. Natural feedback terms will however be decreasing in $y$. In Figure \ref{fig1} we compare the evolution of the classical SIR model in an unstable case with two modified versions having $f(x,y)=h^{2}(y)$ with respectively $h(y) = 1-y$ and $h(y)=(1-y)^2$.
\begin{figure}
	\subfloat[][]{
		\includegraphics[scale=0.185]{./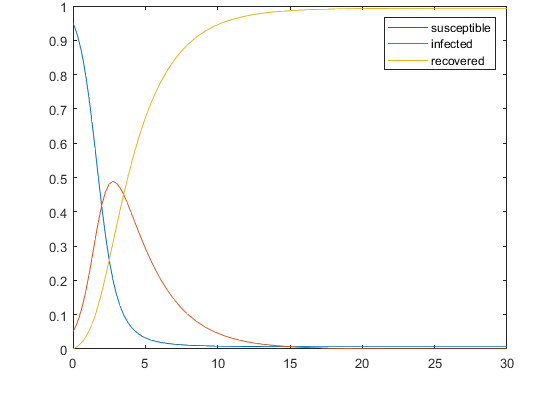}
	}
	\subfloat[][]{
		\includegraphics[scale=0.185]{./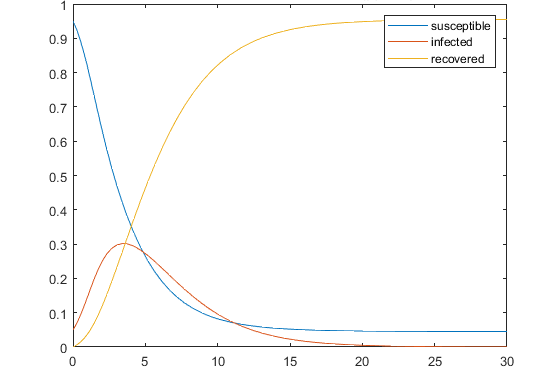}
	}
	\subfloat[][]{
		\includegraphics[scale=0.185]{./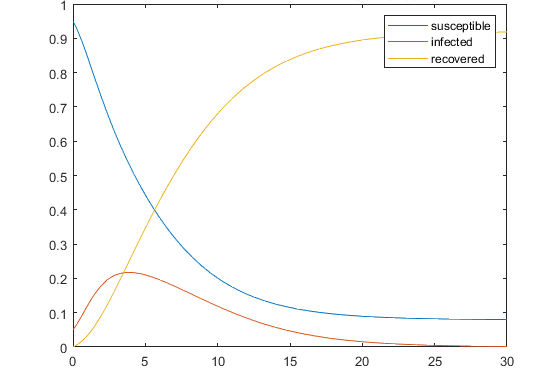}
	}
	\caption[Modified scalar SIR with linear and quadratic control with $R_{0}>1$.]{Simulations of the SIR model in the case of epidemic outbreak. The first figure on the left is the classical model, while the other two figures are the simulations of the modified SIR model with linear and quadratic interaction terms.}
\label{fig1}\end{figure}

\color{black}
\section{SIR model with threshold terms}\label{sec:3}
When we are modeling a control action of a central planner, it is of interest to study the case when $f(y)$ has discontinuities. Indeed, it is not feasible to imagine a policy that varies with continuity, rather it is more natural a policy that changes when the infection reaches certain thresholds. In this section, we study in detail the case when $f(y)$ is piecewise constant. In this case, the analysis carried on in previous section in general fails because of the discontinuities of the right hand side of the ODE (\ref{eq:eq2}). Classical solutions may not exist and, in this context, we will use the concept of solution according to Filippov \cite{filippov:88}.

We consider the ODE (\ref{eq:eq2}) with an interaction term $f(y)$ as defined below
\be\label{piecewise-constant} f(y)=\left\{\begin{array}{ll} \beta\quad &\hbox{if}\, y<k\\
\bar \beta\quad &\hbox{if}\, y\geq k\end{array}\right.\ee
We interpret $\beta$ as a sort of intrinsic interaction/contagion term that, in the absence of control measures, describes the rate at which infection propagates. When the infection gets above the threshold $k$, a (partial) lockdown policy takes place and brings this term to a smaller value $\bar \beta<\beta$. In accord to Remark \ref{remark:classical} we use the notation $\rho=\gamma/\beta$ and $\bar\rho=\gamma/\bar\beta$.

To analyze this model, it is convenient to focus on the first two equations of (\ref{eq:eq2}). The right hand side is a discontinuous vector field that present a so called sliding manifold:
$$\Omega=\left\{(x,y)\;|\; \rho\leq x\leq \min\left\{\bar\rho, 1\right\},\; y=k\right\}$$
Indeed, in a sufficiently small neighborhood of $\Omega$ the vector field of the ODE points in the direction of the manifold. 

This is illustrated in Figure \ref{fig2}.
Trajectories in that region will eventually hit the manifold $\Omega$ and then they will remain on it sliding in the direction of decreasing $x$ till the point  $(\rho, \bar y)$. From that point, the trajectory will remain in the region below (uncontrolled) and it will coincide with the trajectory of the uncontrolled SIR with interaction rate term equal to $\beta$.  
\begin{figure}
	\centering
	\subfloat[][]{
		\includegraphics[scale=0.1]{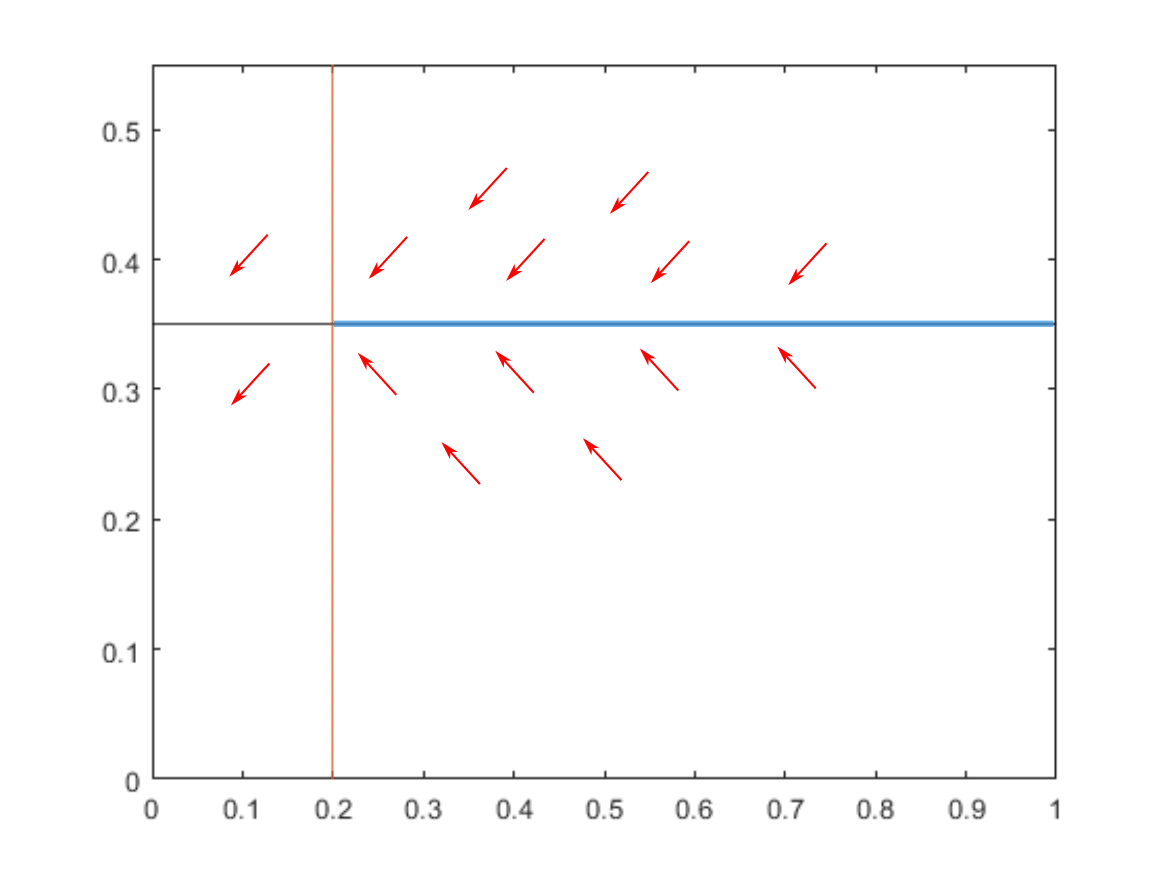}
	}
	\subfloat[][]{
		\includegraphics[scale=0.1]{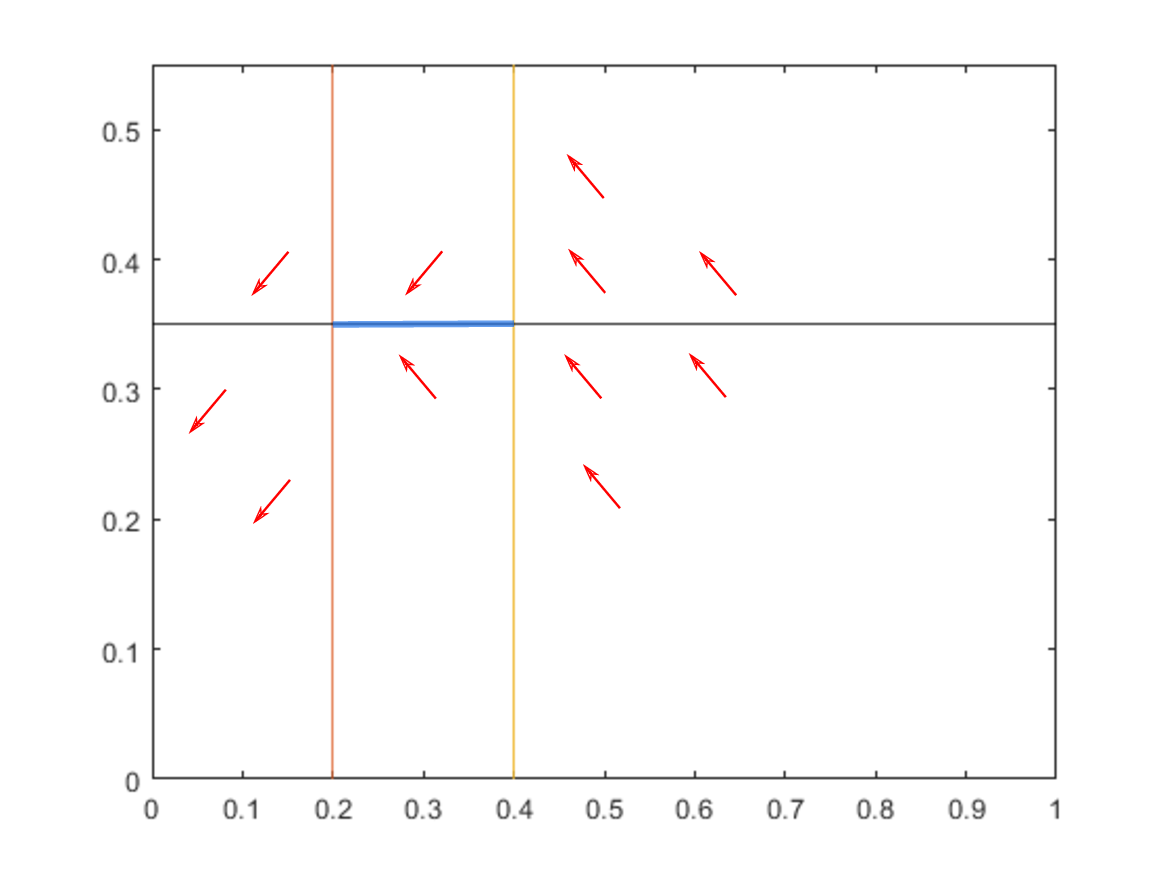}
	}
	\caption[]{Vector fields the ODE (\ref{eq:eq2}) with a piecewise constant feedback term in the $xy$ plane, for the case $\beta = 2$, $\gamma = 0.4$, $k=0.35$, and $\bar \beta = 0.38$ (left) and $\bar \beta = 1$ (right).}
\label{fig2}\end{figure}

In the following result we gather more detailed information on the nature of the sliding phenomenon and the conditions on the parameters for it to happen. To distinguish the two regimes determined by the value of the interaction term, we refer to, respectively, the $\beta$ and the $\bar \beta$-SIR model.

\begin{theorem} 
\label{theo-SIRdisc} 
	Assume that $f $ is as defined in (\ref{piecewise-constant}) and that the initial condition is of type $(x(0), y(0), z(0))=(1-\epsilon, \epsilon, 0)$.  Assume that  $(1-\epsilon)/\rho>1$ and put
	\be m(\epsilon, \rho, \bar\rho)=\left\{\begin{array}{ll}1-\bar\rho+\rho\ln \frac{\bar\rho}{1-\epsilon}\quad &\hbox{if}\, \rho<\bar\rho<1-\epsilon \\
	\epsilon\quad &\hbox{if}\, 1-\epsilon<\bar\rho\end{array}\right.
	\ee
	Consider the solution $(x(t), y(t), z(t))$ of (\ref{eq:eq2}) in the sense of Filippov. Then, the following facts hold: 
\begin{enumerate}
\item[(a)] If $M(\epsilon, \rho)<k$, then $y(t)< k$ at all times and the solution $(x(t), y(t), z(t))$ coincides at all time with that of a $\beta$-SIR model. In particular; the maximum value reached by the infection is $y^{\max}=M(\epsilon, \rho)$.
\item[(b)] If $m(\epsilon, \rho, \bar\rho)<k<M(\epsilon, \rho)$, then there exist time instants $0<t^*<t^{**}$ such that 
\begin{itemize}
\item $y(t)=k$ for all $t\in [t^*, t^{**}]$
\item $y(t)$ is monotonically increasing in $[0,t^*]$, monotonically decreasing in $[t^{**}, +\infty[$ and such that $\lim_{t\to0}y(t)=0$.
\end{itemize}
\item [(c)] If $\epsilon\leq  k\leq m(\epsilon, \rho, \bar\rho)$, then there exists a time instant $t^*>0$ such that 
			\begin{itemize}
			\item $y(t)$ is monotonically increasing in $[0, t^*]$; 
			\item $y(t)$ is monotonically decreasing in $[t^*, +\infty[$ and converges to $0$.
		\end{itemize}
		The maximum value in this third regime is given by the expression
	\be\label{ymax-contr}y^{max}=
		M(\epsilon, \bar\rho)+(\bar\rho-\rho)\ln\frac{1-\epsilon}{x(k)}
		\ee
		where $x(k)$ is the abscissa when the solution crosses the manifold $y=k$ and is explicitly described by the relation
	\be\label{ycross-contr}x(k)+\rho\ln {x(k)} -\rho\ln {(1-\epsilon)}+k-1=0\,.\ee
		\end{enumerate}
\end{theorem}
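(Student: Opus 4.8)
The plan is to reduce to the planar $(x,y)$-subsystem and glue together arcs of the two unmodified SIR flows. On the open region $\{y<k\}$ the trajectory solves the classical $\beta$-SIR, whose orbits are the level curves of $\Gamma_\beta=\rho\ln x+z=\rho\ln x-x+1-y$, along which, by Remark \ref{remark:classical}, $y$ rises to its unique maximum at $x=\rho$ and then falls; on $\{y>k\}$ it solves the $\bar\beta$-SIR, with orbits the level curves of $\Gamma_{\bar\beta}=\bar\rho\ln x-x+1-y$ and maximum at $x=\bar\rho$. As in the unmodified model, $x$ is nonincreasing and $z$ nondecreasing in every regime, so $y(t)\to 0$. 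The first computation is to track the $\beta$-orbit through $(1-\epsilon,\epsilon)$, namely (\ref{orbit}): along it $y$ increases strictly while $x>\rho$ and attains the value $M(\epsilon,\rho)$ at $x=\rho$, so this orbit meets $\{y=k\}$ if and only if $k\le M(\epsilon,\rho)$, and then at the unique $x(k)>\rho$ solving (\ref{ycross-contr}). Since $m(\epsilon,\rho,\bar\rho)$ is exactly the ordinate of (\ref{orbit}) at $x=\min\{\bar\rho,1-\epsilon\}$, the inequality $k<m$ (resp.\ $k>m$) is equivalent to $x(k)>\bar\rho$ (resp.\ $x(k)<\bar\rho$); together with whether $k$ exceeds $M(\epsilon,\rho)$ this produces the three regimes.

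Second, I would set up the Filippov analysis on the switching line $\{y=k\}$. Comparing the $y$-components of the two one-sided fields, the $\beta$-field points into $\{y>k\}$ iff $x>\rho$, while the $\bar\beta$-field points into $\{y<k\}$ iff $x<\bar\rho$; hence the segment $\Omega=\{\,y=k,\ \rho<x<\min\{\bar\rho,1\}\,\}$ is an attractive sliding set, whereas $\{y=k\}$ is a transversal crossing set for $x<\rho$ and for $x>\min\{\bar\rho,1\}$. Since no part is repulsive, the Filippov solution exists and is unique forward in time. A short computation of the equivalent dynamics on $\Omega$ gives $\dot y\equiv 0$ and $\dot x\equiv -\gamma k<0$, so any trajectory reaching $\Omega$ stays on $y=k$, moves leftward, and leaves $\Omega$ exactly at $(\rho,k)$; thereafter the $\beta$-field pushes it into $\{y<k\}$ and it coincides forever with the $\beta$-orbit through $(\rho,k)$, whose peak is $k$ attained at $x=\rho$, so $y$ decreases monotonically to $0$.

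The three cases then follow by assembling these pieces. In (a), $M(\epsilon,\rho)<k$ means the $\beta$-orbit never reaches $\{y=k\}$, so the motion is globally the classical $\beta$-SIR and the behaviour recalled in Remark \ref{remark:classical} gives the claim with $y^{\max}=M(\epsilon,\rho)$. In (b), $m<k<M(\epsilon,\rho)$ gives $x(k)\in(\rho,\min\{\bar\rho,1\})$, so the trajectory climbs along (\ref{orbit}) with $y$ strictly increasing (since $x>\rho$) until it reaches $\Omega$ at a time $t^*$, then slides along $y=k$ until the time $t^{**}$ at which $x=\rho$, and then decreases to $0$ along the $\beta$-orbit through $(\rho,k)$; this is precisely the stated profile, with $y^{\max}=k$. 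In (c), $\epsilon\le k\le m$ forces $x(k)\ge\bar\rho$, so the trajectory crosses $\{y=k\}$ transversally into $\{y>k\}$ at $(x(k),k)$ and then follows the $\bar\beta$-orbit through that point; this orbit increases in $y$ while $x>\bar\rho$ and attains its peak at $x=\bar\rho$, and evaluating $\Gamma_{\bar\beta}$ there and substituting the crossing relation (\ref{ycross-contr}) yields exactly (\ref{ymax-contr}); past the peak $y$ decreases and the trajectory returns to $\{y=k\}$, after which it follows the $\beta$-SIR monotonically down to $0$.

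I expect the main obstacle to be the last step of case (c): showing that after its peak at $x=\bar\rho$ the $\bar\beta$-orbit through $(x(k),k)$ meets $\{y=k\}$ again at some abscissa $x'\le\rho$, so that it re-enters $\{y<k\}$ transversally rather than hitting $\Omega$ a second time. This reduces to a comparison of the level curves of $\Gamma_\beta$ and $\Gamma_{\bar\beta}$ through $(1-\epsilon,\epsilon)$ and $(x(k),k)$ respectively, and it is here that the hypothesis $k\le m$ must be used quantitatively; for $k$ close to $m$ one must be careful, since a short secondary sliding segment on $\{y=k\}$ may appear, in which case the monotonicity in (c) should be read as describing the first descent (the value $y^{\max}$ of (\ref{ymax-contr}) being unaffected). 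The remaining points — invariance of $\mc S$ in the Filippov sense, the form of the equivalent sliding dynamics, and $y(t)\to 0$ — are routine extensions of the arguments already used for the $C^1$ model.
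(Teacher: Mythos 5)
Your proposal is correct and follows essentially the same route as the paper's proof: reduction to the $(x,y)$-plane, use of the orbit invariants of the $\beta$- and $\bar\beta$-SIR flows to locate the crossing of $\{y=k\}$ relative to $\bar\rho$ (equivalently, $k$ relative to $m$), Filippov sliding on $\Omega$ down to $(\rho,k)$ in case (b), and transversal crossing into the $\bar\beta$-regime with peak at $x=\bar\rho$ in case (c), including the possible secondary sliding on the descent that the paper also notes. Your treatment of the sliding vector field and of the attractive/crossing partition of $\{y=k\}$ is somewhat more explicit than the paper's, but the argument is the same.
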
\medskip

\begin{proof}
In the regime described in (a), the solution for the classical $\beta$-SIR model remains always in the region $y<\bar y$. Consequently, it is also a solution of the controlled SIR model with control (\ref{piecewise-constant}).

To study the other cases, we reduce to the $xy$ plane and we only consider the first two equations in (\ref{eq:eq2}). We denote by $(x(t), y(t))$ the solution of the uncontrolled $\beta$-SIR model.  We first note that, using the relation $(\ref{orbit})$, when $\bar\rho <1-\epsilon$ the expression $m(\epsilon, \rho, \bar\rho)$ coincides with the value $y$ of the solution corresponding to $x=\bar\rho$. In other terms, there exists $t_1\geq 0$ such that $x(t_1)=\bar\rho$ and $y(t_1)=m(\epsilon, \rho, \bar\rho)$. 

If we are in the regime described by (b), we notice that for sure $y(0)=\epsilon <k$ so initially the solution leaves in the (uncontrolled) region $y<k$. Indicate by $t^*>0$ the first time when the solution $(x(t), y(t))$ hits the threshold level $k$. Such an instant must exist since the maximum value reached by $y(t)$ is above $k$. When $\bar\rho <1-\epsilon$
notice that necessarily $t_1<t^*$ as  $y(t_1)<k$ and $y(t)$ is increasing till it reaches its maximum value. This implies that 
\begin{equation}\label{interval}\rho<x(t^*)<x(t_1)=\bar\rho\,.\end{equation}

When instead $\bar\rho >1-\epsilon$, we have that $x(t)<\bar\rho$ at all times $t$ so that (\ref{interval}) remains true.
This says that, in any case, the solution hits, at time $t^*$ the sliding manifold $\Omega$. Considering that the derivative of $x$ is always negative, according to the definition of Filippov solution, the solution of  the controlled SIR model from instant $t_1^*$ on will be sliding on $\Omega$ till it reaches the point  $(\rho, k)$. This is reached at some time $t^{**}>t^*$. From time $t^{**}$ on the solution coincides again with the solution of the classical $\beta$-SIR model and will be decreasing in the component $y$ and will converge to $0$.

Consider now the regime (c). The only interesting case is when  $\bar\rho<1-\epsilon$. Consider again $(x(t), y(t))$ the solution of the uncontrolled $\beta$-SIR model. By the considerations above, we have that at time $t_1>0$ when $x(t_1)=\bar\rho$ we have that $y(t_1)>k$. This implies that the solution $(x(t), y(t))$ has hit the line $y=k$ at some previous time $t^*$ for which $x(t^*)>\bar\rho$. 

As $(x(t^*), k)$ is out of the sliding manifold $\Omega$, the solution of the controlled SIR-model will continue with a just a jump in the first derivative and since then it will coincide with the solution of an unstable classical $\bar\beta$-SIR model. The component $y$ will reach a peak for $x=\bar\rho$ and will then decrease and hit again the line $y=k$ at some further time $t_2>t^*$. Depending on whether $x(t_2)<\rho$ or $x(t_2)>\rho$ the solution, in the first case, will undergo another jump in the first derivative and converge as a solution of a $\beta$-SIR model, while in the second case, will first slide along $\Omega$ to the point $(\rho, k)$ and then will converge again as a solution of a $\beta$-SIR model. 

Finally, the values for the maximum reached by the fraction of infected are simply obtained through the formula (\ref{ymax}) that computes the maximum value of the infected in classical SIR models.
\end{proof}\medskip

\begin{remark}
In the regime (c) of Theorem \ref{theo-SIRdisc}, it can happen that the solution, after reaching the peak, exhibits a sliding motion during the decreasing phase. We have not explicitly indicated this in the statement, as this phenomenon does not modify the maximum value reached globally by the component $y(t)$ of the solution. It was however noticed in the proof. In Figure \ref{fig3} we show all possible behaviors of the trajectory in the plane $xy$. 

Notice, moreover, that the expression (\ref{ymax-contr}) for the maximum value reached by the fraction of infected is composed of two term. The first one is the value it would reach under the assumption that $k=\epsilon$, namely that the controlled regime is active since the initial time. The true value $y^{max}$ is obtained from this adding a positive extra term that depends on $k$ and accounts for the fact that for a while the epidemics has growth with no control. The estimation of this term can be relevant in the decision process of policy to adopt.
\end{remark}

\begin{figure}
	\centering
	\subfloat[][]{\hspace{-.5cm}
		\includegraphics[scale=0.33]{./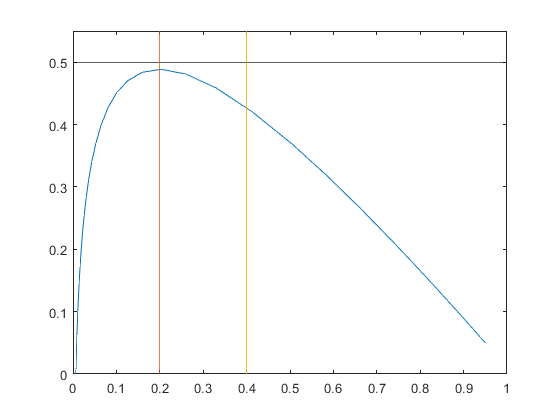}
	}
	\subfloat[][]{\hspace{-.5cm}
		\includegraphics[scale=0.33]{./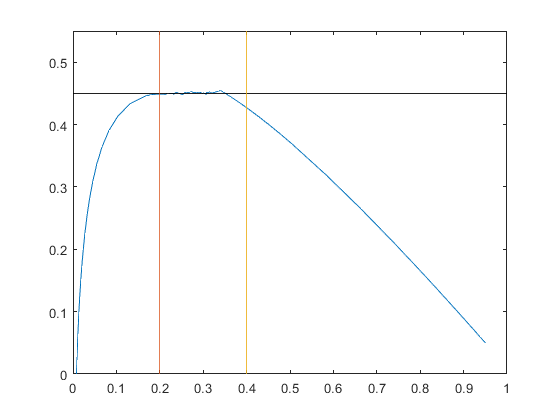}
	}\\
	\subfloat[][]{\hspace{-.5cm}
		\includegraphics[scale=0.33]{./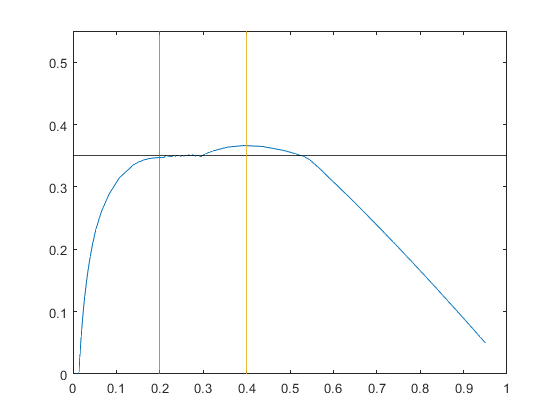}
	}
	\subfloat[][]{\hspace{-.5cm}
		\includegraphics[scale=0.33]{./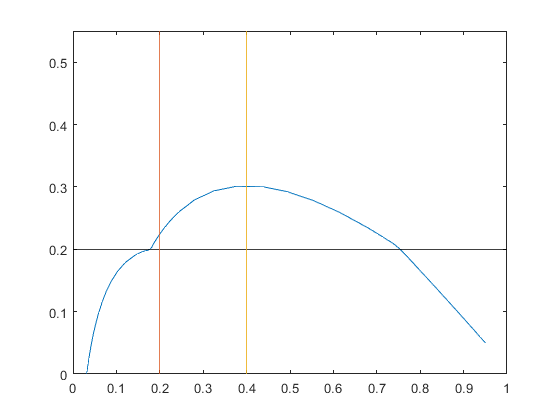}
	}
	\caption[]{All possible trajectory behaviors. Plot (a) corresponds to the first regime exposed in Theorem \ref{theo-SIRdisc}, plot (b) to the second regime, and, finally, plots (c) and (d) to the two possible situations of the third regime.}
	\label{fig3}
\end{figure}

\section{Network effects}\label{sec:4}
In this section, we analyse versions of the network SIR model.

Let $\mc G=(\mc V,\mc E,A)$ be a weighted di-graph with finite set of nodes $\mc V=\{1,2,\ldots,n\}$, set of directed links $\mc E\subseteq\mc V\times\mc V$, and adjacency/weight matrix $A$ in $\R_+^{n\times n}$. The different nodes $i$ in $\mc V$ represent different subpopulations whereas the positive entries $A_{ij}>0$ of the weight matrix are in one-to-one correspondence with the links $(i,j)$ in $\mc E$ and measure the contact frequency of members of subpopulation $i$ with members of subpopulation $j$. Throughout, we shall assume that the diagonal of $A$ is strictly positive, i.e., that $A_{ii}>0$ for all $i=1,\ldots,n$. 

For given infection rate $\beta>0$ and recovery rate $\gamma>0$, the network SIR epidemic model on a graph $\mc G=(\mc V,\mc E,A)$ is the dynamical system 
\begin{equation}
	\label{eq:network-SIR}
	\begin{cases}
		\dot{x}_i(t) = -\beta x_i(t)\sum_jA_{ij} y_j(t)\\
		\dot{y}_i(t) =  \beta x_i(t)\sum_jA_{ij} y_j(t) - \gamma y_i(t)\\
		\dot{z}_i(t) = \gamma y_i(t) \\
	\end{cases}
\end{equation}
for $i=1,\ldots,n$, where $x_i$, $y_i$, and $z_i$ represent respectively the fractions of susceptible, infected, and recovered individuals in population $i$. Notice that \eqref{eq:network-SIR} may be more compactly rewritten as 
\be\label{eqSIR-compact}\dot x=\beta\diag(x)Ay-\gamma y\,,\ \dot y=\beta\diag(x)Ay-\gamma y\,,\ \dot z=\gamma y\,.\ee

This model has been studied in \cite{Mei.ea:2017} and \cite{Nowzari.ea:2016}. In particular, it is known that all solutions converge to an the equilibrium point of the form $(x^*,0,z^*)$ in $\R_+^{3n}$ such that $x^*+z^*=\1$ and  that the locally asymptotically stable equilibrium points are those such that  
$$\lambda_{\max}(\diag(x^*)A)<\gamma/\beta\,,$$ 
where $\lambda_{\max}(M)$ stands for the dominant eigenvalue of a nonnegative matrix, which coincides with its spectral radius thanks to the Perron-Frobenius Theorem. 
In fact, under the assumption that the graph $\mc G$ is strongly connected, \cite[Theorem 7]{Mei.ea:2017} shows that the quantity 
$$R(t)=\frac{\beta}{\gamma}\lambda_{\max}(\diag(x(t))A)$$ is decreasing along solutions and it plays a role similar to the one played by the reproduction number in the scalar SIR model. Specifically,  if $R(0)\le1$ then the weighted average $v(0)'y(t)$ will monotonically decrease to $0$ as $t$ grows large; on the other hand if $R(0)>1$, then the weighted average $v(0)'y(t)$ will be initially increasing (epidemic outbreak) and there exists some $\tau>0$ such that $R(\tau)\le1$ and the weighted average $v(\tau)'y(t)$ will be decreasing to $0$ for $t$ in the interval $[\tau,+\infty)$. Here $v(t)$ stands for the nonnegative leading eigenvector of the matrix $\diag(x(t))A$.

Notice that the results summarized above concern the average behavior of the infection curve, with no implication on its behavior at individual nodes. 

We now study two versions of the SIR network model, in which we introduce a social interaction mitigation function and, as in the previous section, we assume it is dependent only on the fraction of infected and decreasing with respect to it. This control function can be introduced as a modification of interactions between different nodes or any type of interaction, both inter-nodal and within the same subpopulation.
If we consider nodes as geographically distinct subpopulations, the first type of contact limitation will lead to a kind of distancing and isolation per area with movements restriction. The interpretation of nodes in this model as a subdivision of the population into age groups will instead result in a limitation of interactions between people of different ages. This may be justified by an attempt to avoid contact between stronger people and people in age groups more vulnerable to disease, for example. We will then consider the following model

\begin{equation}
	\label{eq:network-SIR-mod}
	\begin{cases}
		\dot{x}_{i}(t) = -\beta x_{i}(t) \left( A_{i i}y_{i}(t) + \summ_{j\neq i}A_{i j}y_{j}(t)f_{i j}(t) \right) \\
		\dot{y}_{i}(t) = \beta x_{i}(t)\left( A_{i i}y_{i}(t) + \summ_{ j\neq i}A_{i j}y_{j}(t)f_{i j}(t) \right) - \gamma y_{i}(t)\\
		\dot{z}_{i}(t) = \gamma y_{i}(t)\,,
	\end{cases}
\end{equation}
where the term $f_{i j}(t)$ concerns the limitation of contacts between the individuals of population $i$ and those of population $j$. Obviously this term $f_{i j}(t)$ could depend on the infection level of both populations. 
For this reason, we can start by assuming that this term is equal to the product between the individual lockdown terms within the populations, that is
$$f_{i j} = f_{i} f_{j} \quad \forall i,j = 1,...,n $$
where the individual lockdown measures considered are the following functions
$$f_{i}(t) = 1-y_{i}(t) \quad \forall i = 1,..., n$$

A second alternative is to consider a control over all types of interaction within the network. In this case the studied model will be instead 
\begin{equation}
	\label{eq:network-SIR-mod2}
	\begin{cases}
		\dot{x}_{i}(t) = -\beta x_{i}(t) \left(A_{i i}y_{i}(t)f_{i}^2(t) + \summ_{j\neq i}A_{i j}y_{j}(t)f_{i j}(t) \right) \\
		\dot{y}_{i}(t) = \beta x_{i}(t)\!\left(\!\!A_{i i}y_{i}(t)f_{i}^2(t)\!+\!\summ_{j\neq i} A_{i j}y_{j}(t)f_{i j}(t)\!\!\right)\! - \gamma y_{i}(t)\\
		\dot{z}_{i}(t) = \gamma y_{i}(t) \,,
	\end{cases}
\end{equation}
where individual lockdown measures $f_{i}$ are assumed as in the previous case.

In Figure \ref{fig4} we show simulations of the network SIR model with $n=2$ subpopulations in the case of epidemic outbreak in both nodes and the two modified versions with the introduction of a internodal and a total control. Regarding the first node, these modifications of the model lead to an attenuation of the infection peak, while for the second node the introduction of the control causes the disappearance of an increasing trait for the curve of the infected.
\begin{figure}
	\subfloat[][]{
		\includegraphics[scale=0.28]{./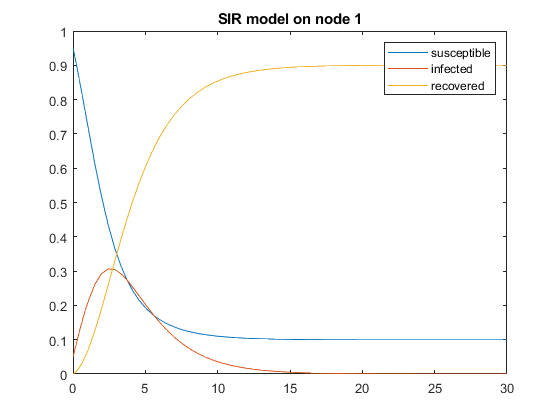}
	}
	\subfloat[][]{
		\includegraphics[scale=0.28]{./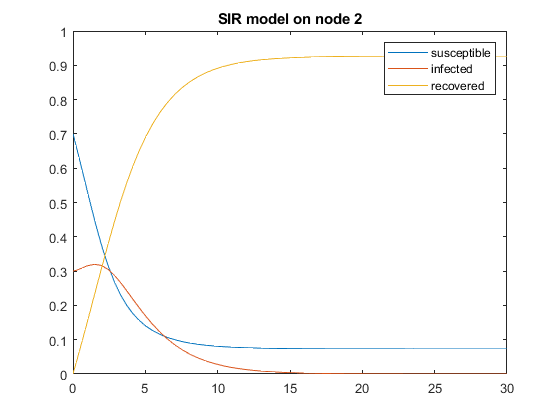}
	}\\
	\subfloat[][]{
		\includegraphics[scale=0.28]{./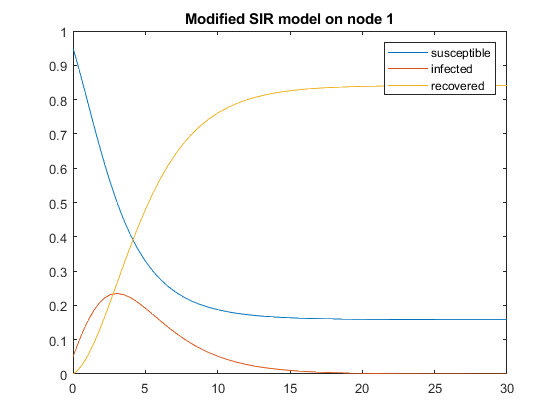}
	}
	\subfloat[][]{
		\includegraphics[scale=0.28]{./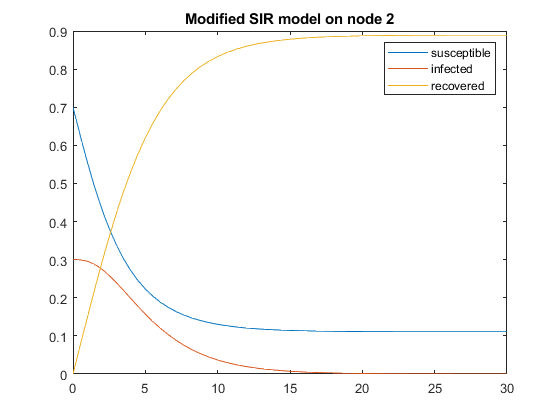}
	}\\
	\subfloat[][]{
		\includegraphics[scale=0.28]{./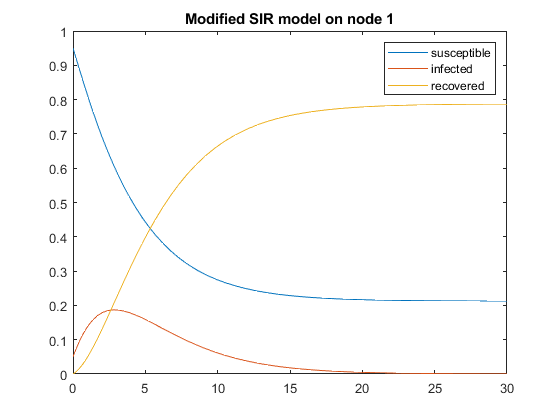}
	}
	\subfloat[][]{
		\includegraphics[scale=0.28]{./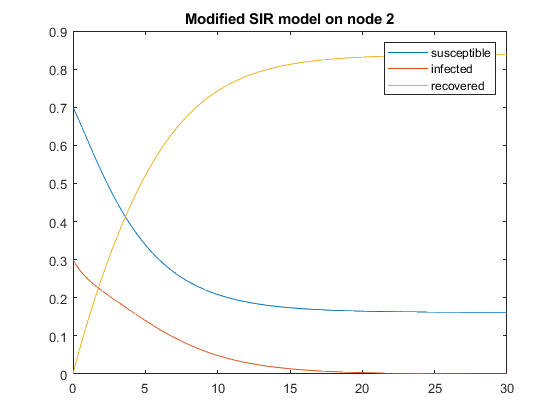}
	}
	\caption[]{Simulations of the SIR model in the case of epidemic outbreak. Plots (a) and (b) are the classical network model in each node, plots (c) and (d) are the modified SIR model with internodal control, while plots (e) and (f) are the simulations in both node with a total control.}
	\label{fig4}
\end{figure}

	\section{Conclusion}
We have studied extensions of the classical Kermack and McKendrick's SIR epidemic model \cite{Kermack.McKendrick:1927} that account for feedback-dependant contact rates and network effects. In particular, we have shown that discontinuous piecewise constant feedback rates may give rise to sliding motions, while for network models, simulations of possible modified versions are shown. Future research includes extension of these results, in particular for the network SIR model.

	\bibliographystyle{unsrt}
	\bibliography{bib}


\end{document}